\documentclass[11pt]{article}

\usepackage[margin=1.1in]{geometry}
\usepackage{amsmath,amssymb,amsthm,mathtools}
\usepackage{bm}
\usepackage{enumitem}
\usepackage{microtype}
\usepackage[hidelinks]{hyperref}

\newcommand{\R}{\mathbb{R}}
\newcommand{\Z}{\mathbb{Z}}
\newcommand{\Sph}{\mathbb{S}}
\newcommand{\T}{\mathbb{T}}
\newcommand{\vol}{\operatorname{vol}}
\newcommand{\Cov}{\operatorname{Cov}}
\newcommand{\Var}{\operatorname{Var}}
\newcommand{\Per}{\operatorname{Per}}

\newcommand{\dist}{\operatorname{dist}}
\newcommand{\tr}{\operatorname{tr}}
\newcommand{\sg}{\lambda_{\mathrm{SG}}}
\newcommand{\la}{\lambda_1}
\newcommand{\norm}[1]{\left\lVert #1\right\rVert}
\newcommand{\ip}[2]{\left\langle #1,#2\right\rangle}
\newcommand{\1}{\mathbf{1}}

\newcommand{\scalar}[1]{\left \langle #1 \right \rangle}

\newcommand{\I}{\mathcal{I}}

\theoremstyle{plain}
\newtheorem{theorem}{Theorem}[section]

\newtheorem{lemma}[theorem]{Lemma}
\newtheorem{corollary}[theorem]{Corollary}
\newtheorem{remark}[theorem]{Remark}
\newtheorem*{fact*}{Fact}

\theoremstyle{definition}
\newtheorem{example}[theorem]{Example}

\begin{document}

\renewcommand*{\thefootnote}{\fnsymbol{footnote}}

\author{Emanuel Milman\thanks{Technion Israel Institute of Technology, Department of Mathematics, Haifa 32000, Israel. Email: emilman@tx.technion.ac.il.}
}
%\footnotetext{$^*$ Department of Mathematics, Technion-Israel Institute of Technology, Haifa 32000, Israel.}

\begingroup%Locallizing the change to `thefootnote'.
    \renewcommand{\thefootnote}{}%Removing the footnote symbol
    %\footnotetext{2020 Mathematics Subject Classification: XXX,XXX.}
    %\footnotetext{Keywords: Flat Torus, Spectral Gap, Isoperimetric Profile, Isotropic Voronoi Cell.}
    \footnotetext{The research leading to these results is part of a project that has received funding from the European Research Council (ERC) under the European Union's Horizon 2020 research and innovation programme (grant agreement No 101001677).}
\endgroup

\title{Spectral and Isoperimetric Bounds on Flat Tori}
\date{}

\maketitle

\begin{abstract}
We record several elementary relations between spectral and isoperimetric parameters of a flat torus $\T_\Lambda = \R^n/\Lambda$ and the covariance structure of a fundamental domain $K$ for the lattice $\Lambda \subset \R^n$.  For every measurable fundamental domain $K$ and nonzero vector $\xi$ in the dual lattice $\Lambda^*$, we observe the sharp directional variance estimate
\[
    \ip{\Cov_K\xi}{\xi}\ge \frac1{12}.
\]
This yields a lower bound on the torus spectral gap $\lambda_{SG}(\T_\Lambda)$ (equivalently, the length of the shortest non-zero dual vector $\lambda_1(\Lambda^*)$) in terms of the maximal covariance of $K$:
\begin{equation} \label{eq:abstract}
 \lambda_{SG}(\T_\Lambda)  = 4 \pi^2 \lambda_1(\Lambda^*)^2 \geq \frac{\pi^2}{3 \norm{\Cov_K}_{op}} . 
\end{equation}
Analogous sharp results are obtained for the isoperimetric profile and the Cheeger constant $D_{Che}(\T_\Lambda)$ using an old argument of Hadwiger.

In particular, when the Voronoi cell $K_{\Lambda}$ of a lattice with $\det \Lambda = 1$ is isotropic, the recent resolution of the Slicing Problem by Klartag and Lehec implies that $D_{Che}(\T_\Lambda),\lambda_{SG}(\T_\Lambda), \lambda_1(\Lambda^*) \geq c > 0$, where $c > 0$ is a universal constant independent of dimension $n$; this may be thought of as a positive resolution of the Kannan--Lov\'asz--Simonovits conjecture for all flat tori. 

While there are lattices $\Lambda$ and corresponding Voronoi cells $K = K_{\Lambda}$ for which no dimension-independent converse inequality to (\ref{eq:abstract}) can hold, we show that under a certain sectional tiling hypothesis, (\ref{eq:abstract}) is in fact an equivalence (up to numeric constants). \end{abstract}

\section{Introduction}

Let $\Lambda\subset\R^n$ be a full-rank lattice and let
\[
    \T_\Lambda:=\R^n/\Lambda
\]
be the associated flat torus. Our goal in this note is to establish quantitative relations between spectral and isoperimetric parameters of $\T_\Lambda$ and the covariance structure of a fundamental domain $K$ of $\Lambda$, most notably its central (convex) Voronoi cell $K_{\Lambda}$. 

The dual lattice is defined as
\[
    \Lambda^*:=\{\xi\in\R^n:\ip{\xi}{\lambda}\in\Z\ \text{for all }\lambda\in\Lambda\},
\]
and the shortest nonzero dual vector is denoted by
\[
    \la(\Lambda^*):=\min\{ |\xi|:0\neq \xi\in\Lambda^*\} . 
\]
A complete system of eigenfunctions for the nonnegative Laplacian $-\Delta$ on $\T_\Lambda$ is given by
\[
    x\longmapsto e^{2\pi i\ip{\xi}{x}},\qquad \xi\in\Lambda^*,
\]
with corresponding eigenvalues $4\pi^2|\xi|^2$ \cite{Fuglede-SpectralSets}.  Consequently, the first nonzero eigenvalue (``the spectral gap") is given by
\[
\lambda_{SG}(\T_\Lambda) = 4 \pi^2 \lambda_1^2(\Lambda^*) . 
\]

A measurable set $K\subset\R^n$ is called a fundamental domain for $\Lambda$ if its translates by $\Lambda$ tile $\R^n$ up to null sets.  We write $|K|$ for its Lebesgue measure (so $|K| =\det\Lambda$). Denoting by $b_K$ its barycenter (when finite), we set
\[
    \Cov_K:=\frac1{|K|}\int_K (x-b_K)\otimes(x-b_K)\,dx.
\]
The central Voronoi cell is the convex fundamental domain
\[
    K_\Lambda:=\big\{x\in\R^n:|x|\le |x-\lambda|\ \text{for every }\lambda\in\Lambda\big\}.
\]
Equivalently,
\[
    K_\Lambda=\bigcap_{\lambda\in\Lambda\setminus\{0\}}
       \left\{x:\ip{x}{\lambda}\le \frac{|\lambda|^2}{2}\right\}.
\]
It is centrally symmetric and hence has barycenter at the origin.

\subsection{Spectral bounds}

\begin{lemma}[Dual directions have uniformly large variance]\label{lem:variance}
Let $K$ be any measurable fundamental domain of $\Lambda$ with finite second moment.  Then for every $0\neq\xi\in\Lambda^*$,
\[
    \ip{\Cov_K\xi}{\xi}\ge \frac1{12}.
\]
In particular,
\begin{equation} \label{eq:intro-SG}
    \sg(\T_\Lambda)= 4 \pi^2 \la(\Lambda^*)^2
       \ge \frac{\pi^2}{3 \norm{\Cov_K}_{\mathrm{op}}}.
\end{equation}
\end{lemma}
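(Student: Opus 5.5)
The plan is to reduce the directional variance to the variance of a one-dimensional random variable, and to bound that from below by the variance of the uniform distribution on an interval of length $1$.

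Fix $0\neq\xi\in\Lambda^*$ and let $X$ be uniformly distributed on $K$. Then $\ip{\Cov_K\xi}{\xi}=\Var(\ip{X}{\xi})$. Write $Y:=\ip{X}{\xi}$ and let $\mu$ denote its law on $\R$.

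\textbf{Step 1 (the law of $Y$ mod $1$ is uniform).} Since $K$ tiles $\R^n$ by $\Lambda$ up to null sets, the quotient map $\pi:K\to\T_\Lambda$ is a measure-preserving bijection a.e. (normalized measures). The function $x\mapsto\ip{x}{\xi}\bmod 1$ is well defined on $\T_\Lambda$, because $\ip{\lambda}{\xi}\in\Z$ for $\lambda\in\Lambda$. Its push-forward of Haar measure on $\T_\Lambda$ is the uniform measure on $\R/\Z$. To see this, compute the Fourier coefficients: for $k\neq 0$,
\[
\int_{\T_\Lambda} e^{2\pi i k\ip{\xi}{x}}\,dx=0,
\]
since $k\xi\in\Lambda^*\setminus\{0\}$ and the characters $e^{2\pi i\ip{\eta}{x}}$, $\eta\in\Lambda^*$, are orthogonal. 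Hence the density $\rho$ of $\mu$ satisfies $\sum_{m\in\Z}\rho(t+m)=1$ for a.e.\ $t$.

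\textbf{Step 2 (one-dimensional extremal problem).} Among probability densities $\rho$ on $\R$ whose $1$-periodization equals $1$ (equivalently $0\le\rho\le 1$ a.e., but we only need the periodization), minimize the variance. For any $c\in\R$ we have $\Var(Y)=\min_c\mathbb{E}(Y-c)^2$. Fixing $c$, we get
\[
\mathbb{E}(Y-c)^2=\int_0^1\sum_m\rho(t+m)\,(t+m-c)^2\,dt\ \ge\ \int_0^1 \min_m (t+m-c)^2\,dt,
\]
because for each $t$ the weights $\rho(t+m)$ are nonnegative and sum to $1$. The right-hand side equals $\int_{-1/2}^{1/2}s^2\,ds=1/12$, the distance from $t$ to $c+\Z$ being uniformly distributed on $[0,1/2]$. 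Thus $\Var(Y)\ge 1/12$. Finiteness of the second moment ensures everything is well defined. The bound is sharp, as the slab $\{0\le\ip{x}{\xi}<1\}$-type domains (e.g.\ a cube for $\Z^n$) show.

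\textbf{Step 3 (consequence).} Choose $\xi$ with $|\xi|=\la(\Lambda^*)$. Then
\[
\frac1{12}\le\ip{\Cov_K\xi}{\xi}\le\norm{\Cov_K}_{op}\,\la(\Lambda^*)^2,
\]
so $4\pi^2\la(\Lambda^*)^2\ge 4\pi^2/(12\norm{\Cov_K}_{op})=\pi^2/(3\norm{\Cov_K}_{op})$. Combined with the eigenvalue description from the introduction, this gives (\ref{eq:intro-SG}).

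The main obstacle is Step 1: justifying rigorously that $\ip{X}{\xi}\bmod 1$ is uniform for an arbitrary measurable fundamental domain. The Fourier/character-orthogonality argument above handles this cleanly. Step 2 is then an elementary pointwise rearrangement.
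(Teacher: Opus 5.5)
Your proof is correct and is essentially the paper's argument. The paper likewise shows that $\ip{\xi}{X} \bmod 1$ is uniform, using that the surjective character $\chi_\xi$ pushes Haar measure to Haar measure where you check vanishing Fourier coefficients, and then applies the pointwise bound $|Y-m|\ge \dist(Y-m,\Z)$, which is exactly your $\min_m$ step with $c$ equal to the mean. One harmless inaccuracy, which you never use: the parenthetical ``equivalently $0\le\rho\le 1$'' is not an equivalence, since periodization equal to $1$ forces $0\le\rho\le 1$ but not conversely (e.g.\ $\rho=\1_{[0,1/2]}+\tfrac12\1_{[1/2,3/2]}$).
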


The constant $1/12$ is immediately seen to be sharp by considering $\Lambda = \Z^n$, $K = [-1/2,1/2]^n$ and coordinate vectors $\xi = e_i$. 

\subsection{Isoperimetric bounds}

There is an analogous statement for the isoperimetric profile of $\T_\Lambda$. Let us normalize the Haar measure $\mu_{\Lambda}$ on $\T_\Lambda$ to be a probability measure.  The corresponding isoperimetric profile is then defined as
\[
    \I_{\T_\Lambda}(v):=\inf \big\{\Per_{\T_\Lambda}(A) : A \subset \T_{\Lambda} ~,~ \mu_\Lambda(A)=v\big\},\qquad 0\le v\le1,
\]
where the infimum ranges over all subsets $A$ of finite perimeter, and the perimeter in $\T_{\Lambda}$ is normalized by $\det\Lambda = \vol(\T_\Lambda)$ in the same way as volume. The corresponding Cheeger constant is defined as
\[
D_{Che}(\T_{\Lambda}) := \inf_{v \in (0,1)} \frac{\I_{\T_{\Lambda}}(v)}{\min(v,1-v)} . 
\]
Since $\T_{\Lambda}$ is flat (and in particular of non-negative Ricci curvature), it is known \cite{SternbergZumbrun1999,Bayle2004} that the isoperimetric profile is concave (and symmetric about $v=1/2$), and hence the infimum above is attained at $v=1/2$, so that $D_{Che}(\T_{\Lambda}) = 2 \I_{\T_{\Lambda}}(1/2)$. Furthermore, we have:
\begin{equation} \label{eq:intro-Buser}
\frac{1}{4} D_{Che}(\T_{\Lambda})^2 \leq  \lambda_{SG}(\T_\Lambda)  \leq \pi D_{Che}(\T_{\Lambda})^2  \; ;
\end{equation}
the first inequality above is due to Cheeger \cite{Cheeger1970}, and the second is De Ponti--Mondino's improvement \cite{DePontiMondino2021} of the numeric constant in Buser's inequality \cite{Buser1982} under non-negative Ricci curvature.

For a measurable fundamental domain $K$ with finite first moment, define
\[
   R_1(K):=\sup_{\theta\in\Sph^{n-1}}
       \frac1{|K|}\int_K |\ip{x}{\theta}|\,dx \;  ;
\]
this parameter can be interpreted as 
the circumradius of the $L^1$-centroid body associated to $K$. 

%$Z_1(K)$, whose support function is
%\[
 %     h_{Z_1(K)}(\theta)= \frac1{|K|} \int_K |\ip{x}{\theta}|\,dx.
%\]

\begin{theorem}[Hadwiger's isoperimetric bound]\label{thm:iso}
For every measurable fundamental domain $K$ of $\Lambda$ with finite first moment,
\[
   \I_{\T_\Lambda}(v)\ge \frac{2v(1-v)}{R_1(K)},
   \;\; \forall v \in [0,1]. 
\]
In particular, it holds that
\begin{equation} \label{eq:intro-Cheeger}
D_{Che}(\T_{\Lambda}) \geq \frac{1}{R_1(K)} . 
\end{equation}
\end{theorem}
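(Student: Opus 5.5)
Hadwiger's argument is a double-counting (Crofton-type) estimate over all segments joining the set to its complement. Let $A\subset\T_\Lambda$ have finite perimeter and $\mu_\Lambda(A)=v$. Lift $A$ to a $\Lambda$-periodic set $\tilde A\subset\R^n$, and take $K$ as the parameter domain for displacements. For $y\in K$, consider the translate $A+y$ on the torus. Since translation preserves Haar measure, the symmetric difference satisfies
\[
\mu_\Lambda\big(A\,\triangle\,(A+y)\big)\le \frac{1}{\det\Lambda}\Per_{\T_\Lambda}(A)\cdot \det\Lambda\cdot\sup_{\theta}\ldots
\]
More precisely, for any $y\in\R^n$ I will use the standard BV translation estimate
\[
\mu_\Lambda\big(A\,\triangle\,(A+y)\big)\le \int_{\partial^* A} |\ip{\nu}{y}|\,d\sigma ,
\]
with the perimeter normalized as in the paper. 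It follows by approximating $1_A$ with smooth periodic functions $f$ and writing $f(x+y)-f(x)=\int_0^1\ip{\nabla f(x+ty)}{y}dt$, so that $\|f(\cdot+y)-f\|_{L^1}\le\int|\ip{\nabla f}{y}|$. This works on the torus because the segment from $x$ to $x+y$ projects to a path in $\T_\Lambda$ and translation invariance handles the averaging.

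Next, average over $y\in K$ with respect to normalized Lebesgue measure. On the left, the key fact is that $K$ is a fundamental domain, so $y\mapsto y \bmod \Lambda$ pushes the uniform measure on $K$ forward to $\mu_\Lambda$. Hence
\[
\frac{1}{|K|}\int_K \mu_\Lambda\big(A\triangle(A+y)\big)\,dy=\int_{\T_\Lambda}\int_{\T_\Lambda}|1_A(x)-1_A(x+z)|\,d\mu_\Lambda(z)\,d\mu_\Lambda(x)=2v(1-v).
\]
On the right, Fubini gives
\[
\int_{\partial^*A}\frac{1}{|K|}\int_K|\ip{y}{\nu(x)}|\,dy\,d\sigma(x)\le R_1(K)\,\Per_{\T_\Lambda}(A),
\]
using the definition of $R_1(K)$ with $\theta=\nu(x)$. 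Combining the two bounds yields $\Per(A)\ge 2v(1-v)/R_1(K)$. Taking the infimum over $A$ gives the profile bound, and plugging in $v=1/2$ with $D_{Che}=2\I(1/2)$ gives \eqref{eq:intro-Cheeger}. Alternatively, one can note that $2v(1-v)/\min(v,1-v)=2\max(v,1-v)\ge1$ directly.

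The main obstacle is the normalization and rigor of the translation estimate for sets of finite perimeter on the torus. I would prove it first for smooth periodic $f$ on a fundamental cell, where everything is a calculus identity. Then I would pass to $1_A$ via an approximation $f_k\to 1_A$ in $L^1$ with $\int|\nabla f_k|\to\Per(A)$ (Anzai–Modica-type, done componentwise so that $\int|\ip{\nabla f_k}{\theta}|\to\int_{\partial^*A}|\ip{\nu}{\theta}|$ by Reshetnyak continuity). I also need to check that normalizing perimeter and volume both by $\det\Lambda$ keeps the two sides consistent. The finite-first-moment hypothesis is exactly what makes the $y$-average on the right finite.
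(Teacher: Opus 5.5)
Your proposal is correct and follows the paper's proof: the directional BV translation estimate, then averaging over $y\in K$ (the pushforward of uniform measure on $K$ to Haar measure gives $2v(1-v)$), then Fubini together with the definition of $R_1(K)$. The only difference is in the approximation step. There the paper just uses that periodic mollification gives $\int|\partial_e f_\varepsilon|\,d\mu_\Lambda\le |D_e f|(\T_\Lambda)$ directly, which avoids strict convergence and Reshetnyak continuity. You should also delete your garbled first display, which the ``more precisely'' statement supersedes.
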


Theorem \ref{thm:iso} is due to Hadwiger \cite{Hadwiger1972} (see also \cite{Ros2005}), who originally proved it for the lattice $\Lambda = \Z^n$; for completeness (and as the original text is in German), we recall the proof. Instead of using a Crofton-type formula for sets with polygonal boundary like Hadwiger, we use an alternative argument which directly applies to general sets of finite perimeter. When $\Lambda = \Z^n$, one may show that $R_1([-1/2,1/2]^n) = 1/4$, and so Theorem \ref{thm:iso} yields a sharp result for the Cheeger constant of the standard flat torus (and hence, by a standard reflection argument, also for the unit cube $[-1/2,1/2]^n$).

\medskip

Note that both Lemma~\ref{lem:variance} and Theorem~\ref{thm:iso} hold for \emph{any} fundamental domain, and that different domains will yield significantly different bounds. For example, for $\Lambda=\Z^2$ the parallelogram generated by $(1,0)$ and $(N,1)$ is again a fundamental domain, but its maximal covariance is of order $N^2$. Presumably it is always preferable to apply these estimates to the central Voronoi cell $K_{\Lambda}$. 

\medskip

Also note that whenever the barycenter of $K$ is at the origin then $R_1(K)^2 \leq \norm{\Cov_K}_{op}$ by Jensen's inequality, and so together with (\ref{eq:intro-Buser}), (\ref{eq:intro-Cheeger}) implies (\ref{eq:intro-SG}) up to the value of the numeric constants. When $K$ is in addition convex, then $\norm{\Cov_K}_{op} \leq 2 R_1(K)^2$ \cite{MelbourneRoysdonTangTkocz2026}, and so (\ref{eq:intro-Cheeger}) and (\ref{eq:intro-SG}) are equivalent up to constants.

\subsection{The KLS conjecture on flat tori}

An isoperimetric conjecture of Kannan--Lov\'asz--Simonovits (KLS) \cite{KLS1995} asserts that for any convex body $K \subset \R^n$, one has:
\begin{equation} \label{eq:intro-KLS}
D_{Che}(K)^2 \geq \frac{c}{\norm{\Cov_K}_{op}} ,
\end{equation}
where $c > 0$ is a dimension-independent constant (and the Cheeger constant $D_{Che}(K)$ is defined analogously as for $\T_{\Lambda}$). Equivalently, by using (\ref{eq:intro-Buser}), the same inequality is conjectured for the Neumann spectral gap $\lambda_{SG}(K)$ (the first nonzero eigenvalue of the Neumann Laplacian on $K$), see e.g.~\cite{EMilman-RoleOfConvexity}. The conjecture easily reduces to the case when $K$ is isotropic, i.e.~its covariance is a multiple of the identity:
\[
 \Cov_K = L_K^2 |K|^{2/n} \mathrm{Id} ,
\]
where $L_K$ is called the isotropic constant of $K$. Thanks to the recent resolution of Bourgain's Slicing Problem by Klartag and Lehec~\cite{KlartagLehec2025}, it is known that $L_K \leq C$ for some dimension-independent constant $C > 0$ (the complementary bound $L_K \geq c > 0$ is elementary).

We see that Lemma~\ref{lem:variance} and Theorem~\ref{thm:iso}, together with the subsequent comments, confirm an analogue to the KLS conjecture for all flat tori (in fact, with respect to the covariance structure of any fundamental domain $K$). Moreover, when $K$ is convex and isotropic, the positive resolution of the Slicing Problem confirms that $\Cov_K$ is entirely controlled by the volume $|K|$. Consequently, one obtains the following appealing result:

\begin{corollary}[Isotropic Voronoi cells have uniformly large dual systoles, KLS conjecture on isotropic flat tori]\label{cor:isotropic}
Assume $\det\Lambda=1$ and that $K_\Lambda$ is isotropic, i.e.
\[
     \Cov_{K_\Lambda}=L_{K_\Lambda}^2\,\mathrm{Id}.
\]
Then
\[
     D_{Che}(\T_\Lambda) , \sg(\T_\Lambda) , \la(\Lambda^*) \ge c>0,
\]
where $c$ is a universal numerical constant, independent of dimension $n$.
\end{corollary}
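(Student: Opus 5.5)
The plan is to combine Lemma~\ref{lem:variance} and Theorem~\ref{thm:iso} with the Klartag--Lehec bound $L_K \le C$, applied to the convex, centrally symmetric fundamental domain $K = K_\Lambda$. Since $\det \Lambda = 1$, we have $|K_\Lambda| = 1$, so isotropy gives $\Cov_{K_\Lambda} = L_{K_\Lambda}^2 \mathrm{Id}$ and hence $\norm{\Cov_{K_\Lambda}}_{op} = L_{K_\Lambda}^2 \le C^2$ with $C$ dimension-independent. This is the only deep input, and it is taken as given.

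\textbf{Spectral gap and dual systole.} Lemma~\ref{lem:variance} with $K = K_\Lambda$ gives
\[
\sg(\T_\Lambda) \ge \frac{\pi^2}{3 \norm{\Cov_{K_\Lambda}}_{op}} \ge \frac{\pi^2}{3C^2}.
\]
Since $\sg(\T_\Lambda) = 4\pi^2 \la(\Lambda^*)^2$, this gives $\la(\Lambda^*) \ge \frac{1}{2\sqrt{3}\,C}$. Alternatively, one can read this directly off the directional estimate $\frac{1}{12} \le \ip{\Cov_{K_\Lambda}\xi}{\xi} = L_{K_\Lambda}^2 |\xi|^2$ for $0 \neq \xi \in \Lambda^*$.

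\textbf{Cheeger constant.} $K_\Lambda$ is centrally symmetric, so its barycenter is at the origin. By Jensen's (or Cauchy--Schwarz's) inequality, for every $\theta \in \Sph^{n-1}$,
\[
\frac{1}{|K|}\int_K |\ip{x}{\theta}|\,dx \le \ip{\Cov_K \theta}{\theta}^{1/2},
\]
so $R_1(K_\Lambda) \le L_{K_\Lambda} \le C$. Theorem~\ref{thm:iso} then yields $D_{Che}(\T_\Lambda) \ge 1/C$. One could also deduce this from the spectral bound via the Buser-type inequality \eqref{eq:intro-Buser}, but the direct route is cleaner.

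Taking $c = \min\bigl(1/C,\ \pi^2/(3C^2),\ 1/(2\sqrt{3}C)\bigr)$ completes the argument. There is no real obstacle beyond invoking the Slicing Problem resolution. The one point to double-check is the normalization: $L_K$ is defined via $\Cov_K = L_K^2 |K|^{2/n}\mathrm{Id}$, and $|K_\Lambda| = \det\Lambda = 1$ is exactly what makes $\norm{\Cov_{K_\Lambda}}_{op} = L_{K_\Lambda}^2$.
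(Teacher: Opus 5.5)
Your proposal is correct and follows the paper's route. The paper gives no separate proof: the corollary comes from Lemma~\ref{lem:variance}, Theorem~\ref{thm:iso}, the Jensen bound $R_1(K)^2 \le \norm{\Cov_K}_{op}$ for centered $K$, and the Klartag--Lehec bound $\norm{\Cov_{K_\Lambda}}_{op} = L_{K_\Lambda}^2 \le C^2$ (using $|K_\Lambda| = \det\Lambda = 1$), which is exactly the chain you wrote out, with your constants checking out.
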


In fact, there is nothing special about the central Voronoi cell $K_\Lambda$, and any isotropic convex fundamental domain $K$ for $\Lambda$ would yield the same consequence, but we chose to formulate things for a concrete convex fundamental domain. 
% For every convex fundamental domain K of a unimodular lattice Λ, the lattice obtained by applying the volume-preserving isotropizing transformation of K has dual systole bounded below by a universal constant.

\bigskip

We mention that thanks to recent progress, the KLS conjecture has almost been fully resolved: by combining Klartag's improved Lichnerowicz estimate \cite{Klartag2023} with the recent Chen--Klartag dimension-independent bound on an associated three-tensor \cite{ChenKlartag2026}, (\ref{eq:intro-KLS}) is known to hold with $c_n =c  \log(1+n)^{-1/2}$, as explicitly recorded in \cite{Letwin2026} (see also \cite{Bizeul2026}). In other words, for any convex $K \subset \R^n$, 
\begin{equation} \label{eq:intro-almost-KLS}
\norm{\Cov_K}_{op}^{-1} \geq \lambda_{SG}(K) \geq \frac{c}{\sqrt{\log(1+n)}} \norm{\Cov_K}_{op}^{-1} . 
\end{equation}

\subsection{Counterexamples to equivalence} \label{subsec:counter}

In view of Lemma~\ref{lem:variance}, it is natural to ask whether the Voronoi cell might satisfy a reverse bound
\begin{equation} \label{eq:intro-reverse}
    \norm{\Cov_{K_\Lambda}}_{\mathrm{op}}\,\la(\Lambda^*)^2\le C
\end{equation}
with some universal constant $C > 0$.  Unfortunately, this is in general false. To see this, recall the sharp estimate obtained by Autissier~\cite{Autissier2013} and Magazinov~\cite{Magazinov2020}
\[
 \left ( n \norm{\Cov_{K_\Lambda}}_{op} \geq \; \right) \; \tr(\Cov_{K_\Lambda}) =   \frac1{|K_\Lambda|}\int_{K_\Lambda}|x|^2\,dx\ge \frac{R(\Lambda)^2}{3},
\]
where $R(\Lambda)=\max_{x\in K_\Lambda}|x|$ is the covering radius.  On the other hand, denoting
\[
    T_n:=\frac1n\sup_{\Lambda\subset\R^n}R(\Lambda)\la(\Lambda^*),
\]
a classical consequence of Siegel's mean-value theorem \cite{Siegel1945} gives
\[
    T_n\ge \frac1{2\pi e}-o(1);
\]
see, e.g., Aggarwal--Stephens-Davidowitz~\cite{AggarwalStephensDavidowitz2019}.  Combining the two gives the following.

\begin{corollary}[No dimension-free converse for Voronoi cells]\label{fact:counterexample}
There exist lattices $\Lambda_n\subset\R^n$ such that
\[
    \norm{\Cov_{K_{\Lambda_n}}}_{\mathrm{op}}\,\la(\Lambda_n^*)^2
       \ge \left(\frac1{12\pi^2e^2}-o(1)\right)n.
\]
In particular, no dimension-independent reverse inequality to Lemma~\ref{lem:variance} can hold for arbitrary lattice Voronoi cells.
\end{corollary}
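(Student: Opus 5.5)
The plan is to combine the two facts recalled just before the statement: the Autissier--Magazinov lower bound on the trace of the Voronoi covariance in terms of the covering radius, and the Siegel-type lower bound on $T_n$.

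First, by the definition of $T_n$ as a supremum together with the bound $T_n \ge \frac{1}{2\pi e} - o(1)$, for each $n$ I will choose a lattice $\Lambda_n \subset \R^n$ with
\[
R(\Lambda_n)\,\la(\Lambda_n^*) \ge \Bigl(\frac1{2\pi e}-o(1)\Bigr)n .
\]
The supremum need not be attained, so I pick $\Lambda_n$ to come within an additional $o(1)$ of it; that error is absorbed into the $o(1)$ term.

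Next, for this lattice I bound the operator norm by the trace and then apply Autissier--Magazinov:
\[
n\norm{\Cov_{K_{\Lambda_n}}}_{op} \ge \tr(\Cov_{K_{\Lambda_n}}) \ge \frac{R(\Lambda_n)^2}{3}.
\]
Multiplying by $\la(\Lambda_n^*)^2/n$ gives
\[
\norm{\Cov_{K_{\Lambda_n}}}_{op}\la(\Lambda_n^*)^2 \ge \frac{\bigl(R(\Lambda_n)\la(\Lambda_n^*)\bigr)^2}{3n} \ge \frac{1}{3n}\Bigl(\frac{1}{2\pi e}-o(1)\Bigr)^2 n^2 = \Bigl(\frac{1}{12\pi^2e^2}-o(1)\Bigr)n .
\]
Since $\la(\Lambda_n^*)$, $R(\Lambda_n)$ and the covariance scale consistently under dilation, no normalization of $\det\Lambda_n$ is needed.

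For the final assertion: if $\norm{\Cov_{K_\Lambda}}_{op}\la(\Lambda^*)^2 \le C$ held for all lattices, it would contradict this linear growth in $n$. The argument is a direct chaining of the two cited results, and the only point needing care is the near-attainment of the supremum in $T_n$.
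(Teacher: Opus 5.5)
Your proposal is correct and is exactly the paper's argument: chain $n\norm{\Cov_{K_{\Lambda_n}}}_{op}\ge \tr(\Cov_{K_{\Lambda_n}})\ge R(\Lambda_n)^2/3$ (Autissier--Magazinov) with a lattice nearly attaining $T_n\ge \frac{1}{2\pi e}-o(1)$, which gives the constant $\frac13\bigl(\frac{1}{2\pi e}\bigr)^2=\frac{1}{12\pi^2e^2}$. Your remarks on the supremum possibly not being attained and on dilation invariance fill in the only details the paper leaves implicit.
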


One might still hope that (\ref{eq:intro-reverse}) should hold for lattices $\Lambda$ (say with $\det \Lambda = 1$) whose Voronoi cell $K_{\Lambda}$ is isotropic; by Corollary \ref{cor:isotropic}, this would imply that $\lambda_{SG}(\T_{\Lambda}) = 4 \pi^2 \lambda_1(\Lambda^*)^2$ remains bounded above and below for such lattices by dimension-independent constants. Unfortunately, this is also equally false -- see Example \ref{ex:counter} and Remark \ref{rem:counter}.

\subsection{Equivalence under sectional tiling}

However, a reverse inequality does in fact hold under a simple structural condition, involving a hyperplane tiling section in a direction of large variance:

\begin{theorem}[A sectional tiling criterion]\label{thm:section}
Let $K\subset\R^n$ be a convex fundamental domain of $\Lambda$ with barycenter at the origin. Suppose that there exists a linear hyperplane $H = \nu^{\perp} \subset\R^n$ ($\nu \in \Sph^{n-1}$) such that:
\begin{enumerate}[label=(\roman*)]
\item $K\cap H$ tiles $H$ by the translations $\Lambda\cap H$;
\item for some $\delta\in(0,1]$, $\scalar{\Cov_K \nu, \nu} \geq \delta \norm{\Cov_K}_{op}$. 
\end{enumerate}
Then
\[
   \frac1{12}
      \le \norm{\Cov_K}_{\mathrm{op}} \la(\Lambda^*)^2 = \frac{1}{4 \pi^2} \norm{\Cov_K}_{\mathrm{op}} \lambda_{SG}(\T_{\Lambda}) 
      \le \frac{1}{2\delta}.
\]
\end{theorem}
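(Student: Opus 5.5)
The lower bound $\frac1{12}\le \norm{\Cov_K}_{op}\la(\Lambda^*)^2$ is just Lemma~\ref{lem:variance} applied to a shortest dual vector, so the whole work is the upper bound. The plan has two parts: first exhibit a short dual vector normal to $H$, then bound the variance of $K$ in direction $\nu$ by a one-dimensional inequality for log-concave densities.

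\emph{Step 1 (a short dual vector normal to $H$).} Hypothesis (i) says that $K\cap H$ is a fundamental domain in $H$ for $\Lambda\cap H$. It therefore has finite positive $(n-1)$-volume, so $\Lambda\cap H$ has rank $n-1$ and is a full-rank lattice in $H$, with $\det(\Lambda\cap H)=|K\cap H|_{n-1}$.
\begin{itemize}[label=$\circ$]
\item Because $\Lambda\cap H$ is a primitive sublattice, the projection of $\Lambda$ onto $\R\nu$ is a discrete group $h\Z\,\nu$ for some $h>0$.
\item Hence $\Lambda$ is the disjoint union of the layers $\Lambda\cap\{\ip{x}{\nu}=kh\}$, $k\in\Z$, and $\det\Lambda=h\,\det(\Lambda\cap H)$.
\item This gives $h=|K|/|K\cap H|$.
\item The vector $\xi:=\nu/h$ satisfies $\ip{\xi}{\lambda}\in\Z$ for all $\lambda\in\Lambda$, so $\xi\in\Lambda^*$.
\end{itemize}
Consequently
\[
\la(\Lambda^*)\le \frac{|K\cap H|}{|K|}=f(0),
\]
where $f$ is the marginal density of the uniform measure on $K$ in direction $\nu$, i.e.\ $f(t)=|K\cap(H+t\nu)|/|K|$.

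\emph{Step 2 (reduction to one dimension).} By hypothesis (ii), $\norm{\Cov_K}_{op}\le \delta^{-1}\Var_f$, where $\Var_f=\ip{\Cov_K\nu}{\nu}$. It therefore suffices to prove
\[
f(0)^2\,\Var_f\le \tfrac12 .
\]
By Brunn--Minkowski, $f$ is $1/(n-1)$-concave and in particular log-concave. Since $b_K=0$, $f$ has mean $0$.

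\emph{Step 3 (the one-dimensional inequality: main obstacle).} The claim is that every log-concave probability density $f$ on $\R$ with mean $0$ satisfies $f(0)^2\Var_f\le\frac12$. The crude bound $f(0)\le\norm{f}_\infty$ combined with $\norm{f}_\infty^2\Var\le 1$ only gives the constant $1$, so evaluating $f$ at the mean has to be exploited. My first attempt splits at $0$ and works with the half-densities $g_\pm(s)=f(\pm s)$, $s\ge0$, which are log-concave on $[0,\infty)$. Write $p_\pm=\int g_\pm$, $m=\int_0^\infty s\,g_+=\int_0^\infty s\,g_-$ (equal by the mean-zero condition), and $a=f(0)=g_\pm(0)$. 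Three standard facts for a log-concave $g$ on $[0,\infty)$ are available:
\begin{enumerate}[label=(\alph*)]
\item $g(0)\int g\le$ const, and more sharply $g(0)\int sg\le(\int g)^2$, the extremal case being exponential (via comparison with the exponential having the same $g(0)$ and mass);
\item $\int s^2 g\le 2(\int sg)^2/\int g$, again extremal for exponentials;
\item hence $\Var_f\le\int s^2g_++\int s^2g_-\le 2m^2\bigl(\tfrac1{p_+}+\tfrac1{p_-}\bigr)$.
\end{enumerate}
From (a), $a\,m\le\min(p_+^2,p_-^2)$. Plugging in gives
\[
a^2\Var_f\le 2\min(p_+,p_-)^4\Bigl(\tfrac1{p_+}+\tfrac1{p_-}\Bigr)\quad\text{with } p_++p_-=1,
\]
which I must then optimize. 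If this split-and-compare scheme loses too much, the fallback is the standard localization approach: reduce to extremal densities of the form $e^{\ell(x)}\1_{[a,b]}$ (truncated exponentials), and verify the inequality by direct one-variable calculus there. Uniform densities give $1/12$ and one-sided exponentials give $e^{-2}$, so $\frac12$ leaves room.

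Once Step 3 is established, Steps 1--2 give
\[
\norm{\Cov_K}_{op}\la(\Lambda^*)^2\le \delta^{-1}\Var_f\,f(0)^2\le\frac1{2\delta}.
\]
The identity with $\sg(\T_\Lambda)$ is just $\sg=4\pi^2\la(\Lambda^*)^2$.
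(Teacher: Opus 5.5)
Your argument is correct. Steps 1--2 coincide with the paper's proof. Your layer computation $\det\Lambda=h\det(\Lambda\cap H)$ is exactly Lemma~\ref{lem:covol}, giving $f(0)=|\xi|\ge\la(\Lambda^*)$ for the primitive dual vector $\xi=\nu/h$ normal to $H$.

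The only difference is Step 3. The paper simply quotes the Hensley--Fradelizi bound $f(0)\le 1/(\sqrt2\,\sigma)$ for mean-zero log-concave densities, whereas you derive it. Your split-at-the-mean scheme already closes, so no localization fallback is needed. With $p=\min(p_+,p_-)\le\frac12$, your bound reads $2p^4\bigl(\frac1p+\frac1{1-p}\bigr)=\frac{2p^3}{1-p}$. This is increasing in $p$ and equals $\frac12$ at $p=\frac12$.

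Contrary to your remark, $\frac12$ does not ``leave room''. The two-sided exponential $\frac12e^{-|x|}$ gives $f(0)^2\Var_f=\frac14\cdot 2=\frac12$. It is tight in (a), in (b), and at $p_+=p_-=\frac12$, so your chain is lossless exactly where it has to be.

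For (b), choose the comparison exponential $h$ with the same mass and first moment as $g$, not the same value at $0$. Since $\{g>h\}$ is an interval, $g-h$ has sign pattern $-,+,-$ with changes at some $s_1<s_2$. Integrating against $(t-s_1)(t-s_2)$ then gives $\int t^2g\le\int t^2h=2(\int tg)^2/\int g$. Both (a) and (b) are instances of Borell's log-concavity of $p\mapsto\Gamma(p+1)^{-1}\int_0^\infty t^pg(t)\,dt$ on $[-1,\infty)$.

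As for what each route buys: the paper's is a one-line citation, while yours makes the proof self-contained and exhibits the equality case of the one-dimensional inequality behind the constant $\frac1{2\delta}$.
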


In view of (\ref{eq:intro-almost-KLS}), this yields an almost equivalence between $\lambda_{SG}(\T_\Lambda)$ and $\lambda_{SG}(K)$ under the above conditions. 
It is also possible to remove the $\sqrt{\log(1+n)}$ term in this equivalence if one can tile $\R^n$ by repeatedly reflecting $K$ across its facets (see e.g.~\cite{HoshikawaUrakawa2010}), but this is a much stronger condition which we do not pursue here. 
% Can only happen for crystalographic affine reflection group
% In that case, Spec​(K_{\Lambda}​)=Spec(Rn/2Λ) = 1/4 Spec (R^n/\Lambda) . 

\bigskip

\textbf{Acknowledgments.} I thank Barak Weiss for helpful references. 

\textbf{AI Declaration:} All of the results in this note have been obtained by the author, with the exception of: 1. the sharp constant $\frac{1}{12}$ in Lemma \ref{lem:variance}, which was observed by ChatGPT 5.6-Sol, replacing a prior universal constant for convex $K$ by the author; 2. the counterexamples mentioned in Subsection \ref{subsec:counter}. The author's starting point in this work was Hadwiger's theorem, which implies an equivalence between the spectral and isoperimetric properties of $K_{\Lambda}$ and $\T_{\Lambda}$ under a certain tiling condition on $K_{\Lambda}$, yielding a version of Theorem \ref{thm:section}; when prompted to check, ChatGPT found that this equivalence cannot hold for general lattices. Finally, ChatGPT provided a first draft of this note, which was checked and polished by the author.

%%%%%%%%%%%%%%%%%%%%%%%%%%%%%%%%%%%%%%%%%%%%%%%%%%%%%%%%%%%%%%%%%%%%%%%%%%%%%%
%%%%%%%%%%%%%%%%%%%%%%%%%%%%%%%%%%%%%%%%%%%%%%%%%%%%%%%%%%%%%%%%%%%%%%%%%%%%%%
%%%%%%%%%%%%%%%%%%%%%%%%%%%%%%%%%%%%%%%%%%%%%%%%%%%%%%%%%%%%%%%%%%%%%%%%%%%%%%
%%%%%%%%%%%%%%%%%%%%%%%%%%%%%%%%%%%%%%%%%%%%%%%%%%%%%%%%%%%%%%%%%%%%%%%%%%%%%%
%%%%%%%%%%%%%%%%%%%%%%%%%%%%%%%%%%%%%%%%%%%%%%%%%%%%%%%%%%%%%%%%%%%%%%%%%%%%%%
%%%%%%%%%%%%%%%%%%%%%%%%%%%%%%%%%%%%%%%%%%%%%%%%%%%%%%%%%%%%%%%%%%%%%%%%%%%%%%

\section{A universal covariance lower bound}

We begin with the elementary Haar-measure argument behind Lemma~\ref{lem:variance}.

\begin{proof}[Proof of Lemma~\ref{lem:variance}]
Let $X$ be uniformly distributed on $K$.  Since $K$ is a fundamental domain, the random point $X+\Lambda$ is Haar-uniform on $\T_\Lambda = \R^n / \Lambda$.  Fix $0\neq\xi\in\Lambda^*$.  The character
\[
   \chi_\xi:\T_\Lambda\to\R/\Z,
   \qquad \chi_\xi(x+\Lambda)=\ip{\xi}{x}\pmod1,
\]
is a well-defined nontrivial continuous homomorphism.  It is surjective, and therefore it pushes the Haar measure on $\T_\Lambda$ forward to the Haar measure on $\R/\Z$.

Set $Y=\ip{\xi}{X}$ and $m=\mathbb EY$.  Then $Y-m\pmod1$ is uniform on $\R/\Z$.  Let $Z\in[-1/2,1/2)$ denote its centered representative.  Thus $Z$ is uniform on $[-1/2,1/2)$ and
\[
    |Y-m|\ge \dist(Y-m,\Z)=|Z|.
\]
Consequently,
\[
    \ip{\Cov_K\xi}{\xi}
       =\Var(Y)
       =\mathbb E|Y-m|^2
       \ge \mathbb E|Z|^2
       =\int_{-1/2}^{1/2}t^2\,dt
       =\frac1{12}.
\]
Choosing a shortest nonzero $\xi\in\Lambda^*$ and using
\[
   \ip{\Cov_K\xi}{\xi}
      \le \norm{\Cov_K}_{\mathrm{op}}|\xi|^2
\]
gives the spectral gap estimate.
%For $\Lambda=\Z^n$ and $K=[-1/2,1/2]^n$, equality holds for each coordinate vector, so $1/12$ is sharp.
\end{proof}

\section{The isoperimetric profile and a BV translation estimate}

We recall a standard directional translation estimate for functions of bounded variation (see \cite{MaggiBook} for missing standard definitions and results involving sets of finite perimeter and BV functions). 

\begin{lemma}[Directional $BV$ translation estimate]\label{lem:BV}
Let $A\subset\T_\Lambda$ be a set of finite perimeter, let $\partial^* A$ denote its reduced boundary, and let $z\in\R^n$.  Then
\[
   \mu_\Lambda\big(A\triangle(A+z)\big)
   \le \int_{\partial^*A}|\ip{z}{\nu_A(y)}|\,d\Per_{\T_\Lambda}(y).
\]
\end{lemma}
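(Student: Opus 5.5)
The plan is to prove the estimate first for smooth functions by the fundamental theorem of calculus along the segment $t\mapsto x+tz$, and then pass to general sets of finite perimeter by approximation. Identify $A$ with its indicator $u=\1_A$, viewed as a $\Lambda$-periodic function on $\R^n$, or equivalently as a function on $\T_\Lambda$. Then
\[
\mu_\Lambda\big(A\triangle(A+z)\big)=\int_{\T_\Lambda}|u(x-z)-u(x)|\,d\mu_\Lambda(x).
\]
The right-hand side of the claimed inequality is $|\ip{z}{Du}|(\T_\Lambda)$, the total variation of the directional derivative measure. Indeed, by De Giorgi's structure theorem, $Du=-\nu_A\,\mathcal H^{n-1}\llcorner\partial^*A$, with the normalization by $\det\Lambda$ that is used for both $\mu_\Lambda$ and $\Per_{\T_\Lambda}$. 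So it suffices to show that for every $u\in BV(\T_\Lambda)$,
\[
\int_{\T_\Lambda}|u(x-z)-u(x)|\,dx\le|\ip{z}{Du}|(\T_\Lambda).
\]

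\emph{Step 1 (smooth case).} For $u\in C^\infty(\T_\Lambda)$ write
\[
u(x-z)-u(x)=-\int_0^1\ip{z}{\nabla u(x-tz)}\,dt .
\]
Take absolute values, integrate over $\T_\Lambda$, and apply Fubini. Since translation by $tz$ preserves Haar measure on the torus, each $t$-slice contributes $\int_{\T_\Lambda}|\ip{z}{\nabla u}|$. This gives the inequality with $\int|\ip{z}{\nabla u}|\,dx$ on the right.

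\emph{Step 2 (approximation).} Mollify, $u_\epsilon=u*\rho_\epsilon$, with $\rho_\epsilon$ a standard mollifier on $\R^n$. Convolution commutes with periodicity, so $u_\epsilon$ descends to $\T_\Lambda$. Then $u_\epsilon\to u$ in $L^1(\T_\Lambda)$, so the left-hand side converges, since translation is an $L^1$ isometry. On the other side, $\ip{z}{\nabla u_\epsilon}=\rho_\epsilon*\ip{z}{Du}$, so
\[
\int|\ip{z}{\nabla u_\epsilon}|\le|\ip{z}{Du}|(\T_\Lambda)
\]
by the elementary bound $\|\rho*\mu\|_{L^1}\le|\mu|(\T_\Lambda)$ for a probability kernel $\rho$ and a finite measure $\mu$. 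Passing to the limit gives the BV inequality.

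\emph{Step 3 (identification).} Apply the BV inequality to $u=\1_A$ and use $Du=-\nu_A\,\mathcal H^{n-1}\llcorner\partial^*A$ to get
\[
|\ip{z}{Du}|=|\ip{z}{\nu_A}|\,\mathcal H^{n-1}\llcorner\partial^*A ,
\]
which is exactly the stated right-hand side.

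The only delicate point is Step 2. There one must check that mollification and the structure theorem are applied on the torus rather than on $\R^n$; this is harmless, working locally and using periodicity. One must also use the convolution bound on the directional component $\ip{z}{Du}$ itself, not on the full $|Du|$, so as to retain the factor $|\ip{z}{\nu_A}|$ rather than the weaker $|z|$.
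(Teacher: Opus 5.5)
Your proof is correct and follows the paper's argument essentially step for step. Both use the fundamental theorem of calculus along the segment for smooth periodic functions together with translation invariance of Haar measure. Both then mollify, using the directional bound $\norm{\rho_\epsilon * \ip{z}{Du}}_{L^1(\T_\Lambda)} \le |\ip{z}{Du}|(\T_\Lambda)$, and apply the De Giorgi structure theorem to identify $|\ip{z}{D\1_A}|$ with $|\ip{z}{\nu_A}|\,d\Per_{\T_\Lambda}$. The only cosmetic difference is that the paper writes $z=te$ with $|e|=1$ and works with $D_e$, while you keep $z$ unnormalized.
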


\begin{proof}
Write $z=te$ with $|e|=1$.  For a smooth periodic function $f$, the fundamental theorem of calculus gives
\[
    f(x+te)-f(x)=\int_0^t \partial_e f(x+se)\,ds,
\]
and hence, by translation invariance,
\[
    \int_{\T_\Lambda}|f(x+te)-f(x)|\,d\mu_\Lambda(x)
       \le |t|\int_{\T_\Lambda}|\partial_e f|\,d\mu_\Lambda.
\]
For a general periodic $BV$ function $f$, let $f_\varepsilon=f \ast \rho_\varepsilon$ denote its mollification (which remains periodic).
%(namely lifting $f$ to a periodic function on $\R^n$
 %$\rho^{\text{per}}_\varepsilon(x) = \sum_{\lambda \in \Lambda} \rho_\varepsilon(x + \lambda)$, $\rho_\varepsilon(x) = \varepsilon^{-n} \rho(x/\varepsilon)$ for some $\rho \in C_c^\infty(\R^n)$, $\rho \geq 0$ and $\int_{\R^n} \rho dx = 1$). 
Then $f_\varepsilon\to f$ in $L^1(\T_\Lambda)$ and
\[
   \partial_e f_\varepsilon=(D_e f) \ast \rho_\varepsilon,
   \qquad
   \int_{\T_\Lambda}|\partial_e f_\varepsilon|\,d\mu_\Lambda
      \le |D_e f|(\T_\Lambda) ,
\]
where $D_e f$ denotes the distributional derivative of $f$ in the direction of $e$. 
Applying the preceding estimate to $f_\varepsilon$ and passing to the
limit in $L^1$, using the translation invariance of Haar measure, yields
\[
   \int_{\T_\Lambda}|f(x+te)-f(x)|\,d\mu_\Lambda(x)
      \le |t|\,|D_e f|(\T_\Lambda).
\]
%By a standard approximation a general (periodic) $BV$ function by smooth (periodic) functions, we obtain
%\[
%    \int_{\T_\Lambda}|f(x+te)-f(x)|\,d\mu_\Lambda(x)
%       \le |t|\,|D_e f|(\T_\Lambda) ,
%\]
For $f=\1_A$, the structure theorem for sets of finite perimeter gives
\[
   |D_e\1_A|(\T_\Lambda)
      =\int_{\partial^*A}|\ip{e}{\nu_A}|\,d\Per_{\T_\Lambda},
\]
concluding the proof. 
\end{proof}

\begin{proof}[Proof of Theorem~\ref{thm:iso}]
Let $A\subset\T_\Lambda$ be a set of finite perimeter with $\mu_\Lambda(A)=v$.  Averaging translations over a fundamental domain gives
\begin{align*}
 \frac1{|K|}\int_K \mu_\Lambda\big(A\triangle(A+x)\big)\,dx
 =\int_{\T_\Lambda}\mu_\Lambda\big(A\triangle(A+y)\big)\,d\mu_\Lambda(y) =2v(1-v).
\end{align*}
Indeed, the last expression is the probability that two independent Haar-uniform points lie on opposite sides of $A$.

Consequently, applying Lemma~\ref{lem:BV} and Fubini's theorem, we obtain
\begin{align*}
 2v(1-v)
\le \int_{\partial^*A}
       \left(\frac1{|K|}\int_K |\ip{x}{\nu_A(y)}|\,dx\right)
       d\Per_{\T_\Lambda}(y)
\le R_1(K)\Per_{\T_\Lambda}(A).
\end{align*}
Taking infimum over all $A$ as above concludes the proof. 
\end{proof}

\section{Counterexamples to uniformly upper bounded spectral gap}

As alluded to in the Introduction, the lower bound in Corollary~\ref{cor:isotropic} cannot be complemented by a
dimension-independent upper bound, as witnessed by the following example.

\begin{example} \label{ex:counter}
 Let $m\geq 3$ be odd, put $n=2^m$, and consider the Barnes--Wall lattice $BW_m\subset\R^n$ \cite{Nebe-BarnesWall}. 
In the standard unimodular normalization, $BW_m$ is even unimodular and
\[
    \lambda_1(BW_m)^2
    =2^{(m-1)/2}=\sqrt{\frac{n}{2}}.
\]
Let $K_m$ denote the central Voronoi cell of $BW_m$. The automorphism group of
$BW_m$ contains a normal extraspecial subgroup $E_m$
whose natural representation on $\R^{2^m}$ is absolutely irreducible;
see~\cite{NebeRainsSloane-Clifford}. Since every
$g\in\textrm{Aut}(BW_m)$ is orthogonal and preserves $K_m$, we have
\[
    g\,\Cov_{K_m}\,g^T=\Cov_{K_m}.
\]
Consequently $\Cov_{K_m}$ commutes with the action of $E_m$, and
irreducibility (or Schur's lemma) yields
\[
    \Cov_{K_m}=L_m^2\, \mathrm{Id}.
\]
Thus the Voronoi cell of $BW_m$ is isotropic. Since $BW_m^*=BW_m$, it
follows that
\[
    \lambda_{SG}(\T_{BW_m})
      =4\pi^2\lambda_1(BW_m^*)^2
      =4\pi^2\sqrt{\frac{n}{2}}.
\]
In particular, even among lattices with isotropic Voronoi cells, the spectral
gap of the associated flat torus is not bounded above by a
dimension-independent constant.
\end{example}

\begin{remark} \label{rem:counter}
Other lattices provide even faster growth rate of the spectral-gap -- according to ChatGPT, there is a sequence of dimensions $n$ growing to infinity and lattices $\Lambda_n \subset \R^n$ (dual to the Craig ideal cyclotomic lattices) with $\det \Lambda_n = 1$, such that $K_{\Lambda_n}$ is isotropic and 
\[
\lambda_{1}(\Lambda_n^*) = \max_{r \in \mathbb{N}, r | n} 2 r p^{-(2r-1)/n} \simeq \frac{n}{\log(1+n)},
\]
 but we did not verify this ourselves. This is essentially the maximal growth rate, since it is known that the Hermite constant, defined as
\[
    \gamma_n:=\sup_{\Lambda \subset \R^n , \det\Lambda=1} \lambda_1(\Lambda)^2 ,
\]
satisfies
\[
    c n\leq \gamma_n\leq C n
\]
for universal constants $c,C>0$ (the upper bound follows from Minkowski's first lattice theorem \cite{GruberLekkerkerker}, and the lower bound follows from Siegel's mean-value theorem \cite{Siegel1945}).  Consequently, for every lattice $\Lambda\subset\R^n$ with $\det\Lambda=1$ (regardless of whether $K_{\Lambda}$ is isotropic or not),
\[
    \lambda_{SG}(\T_\Lambda)
    =4\pi^2\lambda_1(\Lambda^*)^2
    \leq 4\pi^2\gamma_n
    \leq C' n.
\]  
\end{remark}

\section{Spectral gap upper bound under a sectional tiling condition}

Before establishing Theorem~\ref{thm:section}, a few elementary observations are needed. For $0 \neq \xi \in \Lambda^*$, note that $\Lambda \cap \xi^{\perp}$ is of full ($n-1$) rank in $\xi^{\perp}$. Indeed, the homomorphism $\Phi_\xi : \Lambda \rightarrow \Z$ given by $\Phi_{\xi}(\lambda) = \scalar{\lambda,\xi}$ is nontrivial, and hence its image is a nonzero subgroup of $\Z$, i.e. $m \Z$ for some $m \geq 1$. Consequently $\Lambda \cap \xi^{\perp} = \ker \Phi_{\xi}$ has rank $n-1$. The dual vector $\xi$ is called primitive if $m = 1$, or equivalently, if  $t \xi \notin \Lambda^*$ for all $t \in (0,1)$.

\begin{lemma}[Covolume of a rational hyperplane]\label{lem:covol}
Let $0\neq \xi\in\Lambda^*$ be primitive and put $H=\xi^\perp$ and $\Lambda_0=\Lambda\cap H$.  Then
\[
     \det_H(\Lambda_0)=|\xi|\det(\Lambda).
\]
\end{lemma}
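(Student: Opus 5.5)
We prove the identity by slicing $\R^n$ along the level sets of $x\mapsto\ip{\xi}{x}$ and computing the covolume of $\Lambda$ in two ways.

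First, since $\xi$ is primitive, the homomorphism $\Phi_\xi:\Lambda\to\Z$, $\lambda\mapsto\ip{\lambda}{\xi}$, is surjective. Pick $\lambda_1\in\Lambda$ with $\ip{\lambda_1}{\xi}=1$. Then every $\lambda\in\Lambda$ decomposes uniquely as $\lambda=\lambda_0+k\lambda_1$ with $k=\Phi_\xi(\lambda)\in\Z$ and $\lambda_0\in\Lambda_0=\ker\Phi_\xi$. Hence $\Lambda=\Lambda_0\oplus\Z\lambda_1$. As observed above, $\Lambda_0$ has rank $n-1$, so a basis $v_1,\dots,v_{n-1}$ of $\Lambda_0$ together with $\lambda_1$ gives a basis of $\Lambda$.

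Next, the parallelepiped spanned by $v_1,\dots,v_{n-1},\lambda_1$ has volume equal to the $(n-1)$-dimensional volume of its base $P_0=\sum[0,1)v_i\subset H$ times its height. The base volume is $\det_H(\Lambda_0)$. The height is the length of the component of $\lambda_1$ along the unit normal $\xi/|\xi|$, namely $\ip{\lambda_1}{\xi}/|\xi|=1/|\xi|$. This is the base-times-height formula: write the determinant in an orthonormal basis adapted to $H\oplus\R\xi$, where the matrix is block triangular. We therefore obtain $\det\Lambda=\det_H(\Lambda_0)/|\xi|$, which is the claim.

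The only point needing care is the splitting $\Lambda=\Lambda_0\oplus\Z\lambda_1$, and this is exactly where primitivity is used. If instead $\Phi_\xi(\Lambda)=m\Z$ with $m>1$, the same computation would give a height of $m/|\xi|$, and the identity would fail by a factor of $m$. Everything else is routine linear algebra.
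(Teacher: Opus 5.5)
Your proposal is correct and follows essentially the paper's route: primitivity gives a lattice vector with $\ip{\lambda_1}{\xi}=1$, so the layers of $\Lambda$ parallel to $H$ are spaced $1/|\xi|$ apart, and a base-times-height computation over a fundamental domain of $\Lambda_0$ gives $\det\Lambda=\det_H(\Lambda_0)/|\xi|$. Your explicit splitting $\Lambda=\Lambda_0\oplus\Z\lambda_1$ and the block-triangular determinant make rigorous the paper's briefer ``fundamental prism'' step.
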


\begin{proof}
Since $\xi$ is primitive, there exists $\lambda_0\in\Lambda$ with $\ip{\xi}{\lambda_0}=1$.  The lattice hyperplanes parallel to $H$ are therefore spaced by distance $1/|\xi|$.  A fundamental prism with base a fundamental domain of $\Lambda_0$ and height $1/|\xi|$ has volume $\det\Lambda$, and so $\det\Lambda=\det_H(\Lambda_0)/|\xi|$. 
\end{proof}

We shall also use a standard one-dimensional fact about log-concave densities: if $f$ is a log-concave probability density on $\R$, with mean zero and variance $\sigma^2$, then
\begin{equation}\label{eq:Hensley}
      \frac{1}{\sqrt{12}\,\sigma}\le f(0)\le \frac{1}{\sqrt{2}\,\sigma}.
\end{equation}
In the even case this is due to Hensley \cite{Hensley1980}, see Fradelizi \cite{Hyperplanesectionsofconvexbodiesinisotropicposition} for the general case. 

\begin{proof}[Proof of Theorem~\ref{thm:section}]
Since $K \cap H$ tiles $H$ by the translations $\Lambda \cap H$, then the latter lattice must be of full rank in $H$, and hence $H = \xi^{\perp}$ for some $0 \neq \xi \in \Lambda^*$, 
% Exactly the same homomorphism argument as above -- \Lambda / \Lambda \cap H has rank 1, so there is a non-zero homomorphism \Lambda -> Z  with kernel \Lambda \cap H , and extending it linearly gives \xi \in \Lambda^*
which we may assume is primitive. Up to changing its sign, we have $\xi = \nu |\xi|$. 

Now let $X$ be uniform on $K$ and let $Y=\ip{X}{\nu}$. Its density is
\[
     f_Y(t)=\frac{1}{|K|} \vol_{n-1}\big(K\cap(t \nu+H)\big) .
\]
 Since $K$ is convex, $f_Y$ is log-concave by the Brunn--Minkowski theorem \cite{AsymptoticGeometricAnalysis-Volume1}; since $b_K=0$, it has mean zero. Denote its variance by $\sigma^2:=\ip{\Cov_K \nu}{\nu}$. 
 
 By the sectional tiling hypothesis, $K\cap H$ is a fundamental domain of $\Lambda\cap H$.  Lemma~\ref{lem:covol} therefore gives
\[
      f_Y(0) = \frac{\vol_{n-1}(K\cap H)}{|K|}
      =\frac{\det_H(\Lambda\cap H)}{\det\Lambda}
      =|\xi|.
\]
Therefore, applying our assumption and (\ref{eq:Hensley}), we deduce
\[
\delta \norm{\Cov_K}_{op} \leq \scalar{\Cov_K \nu, \nu} = \sigma^2 \leq \frac{1}{2 f_Y(0)^2} = \frac{1}{2 |\xi|^2} \leq \frac{1}{2 \lambda_1(\Lambda^*)^2} .
\]
Together with Lemma~\ref{lem:variance}, this concludes the proof. 
\end{proof}

\end{document}